\definecolor{gray73}{RGB}{186,186,186}
\newtheorem{theorem}{Theorem}[section]
\newtheorem{lemma}{Lemma}[section]
\newtheorem{remark}[theorem]{Remark}
\newtheorem{algorithm}[theorem]{Algorithm}
\newenvironment{proof}[1][Proof]{\noindent\textbf{#1:} }{\ \rule{0.5em}{0.5em}}
\numberwithin{equation}{section}
\newcommand{\C}{\mbox{$\mathbb C$}}
\newcommand{\supp}{\mbox{supp}\,}
\renewcommand{\mod}{\mbox{mod}\,}
\DeclareMathOperator{\ii}{\mathrm i\!}
\DeclareMathOperator{\e}{\mathrm e}
\font\mfett=cmmib10 at12pt
\def\eps{\hbox{\mfett\char034}}
\begin{document}
\title{A deterministic sparse FFT algorithm for vectors with small support}
\author {Gerlind Plonka\footnote{University of G\"ottingen, Institute for Numerical and Applied Mathematics, Lotzestr.\ 16-18,  37083 G\"ottingen, Germany. Email: plonka@math.uni-goettingen.de} \qquad
Katrin Wannenwetsch\footnote{University of G\"ottingen, Institute for Numerical and Applied Mathematics, Lotzestr.\ 16-18,  37083 G\"ottingen, Germany. Email: k.wannenwetsch@math.uni-goettingen.de}}

\maketitle

\abstract{In this paper we consider the special case where a signal $\mathbf x\in \C^N$ is known to vanish outside a support interval of length $m < N$. If the support length  $m$ of $\mathbf x$ or a good bound of it is a-priori known we derive a sublinear deterministic  algorithm to compute $\mathbf x$ from its discrete Fourier transform $\widehat{\mathbf x}\in\C^N$. 
In case of exact Fourier measurements we require only  ${\cal O}(m \log m)$  arithmetical operations. For noisy measurements, we propose 
 a stable ${\cal O}(m \log N)$ algorithm.
 }
\smallskip

\noindent
\textbf{Key words.} discrete Fourier transform, sparse Fourier reconstruction, sublinear sparse FFT \\
\textbf{AMS Subject classifications.} 65T50, 42A38

\section{Introduction}

It is well-known that  FFT algorithms  for the computation of the discrete Fourier transform of length $N$ 
require ${\cal O}(N \log N)$ arithmetical operations.
However, if the resulting vector is a-priori known to be sparse, i.e., contains only a small number of 
non-zero components, the question arises, whether we can do this computation in an even faster way.
In this paper we derive a new deterministic sparse FFT algorithm for signals $\mathbf x\in \C^N$ that are  a-priori known to vanish outside a support interval of length $m < N$.
\smallskip

{\bf Related work.}
In recent years many sublinear algorithms for the sparse Fourier transform have been proposed that focus on (approximately) $m$-sparse vectors or vectors being norm bounded. 

Most of the these methods are randomized  poly-logarithmic sparse Fourier algorithms achieving e.g.\ a complexity  of ${\cal O}(m \log N)$ \cite{HIKP12a} for $m$-sparse signals, or even ${\cal O}(m \log m)$, see e.g.\ \cite{LWC13, PR13}.
However, these algorithms  succeed to compute the correct result only with constant probability being smaller than $1$.
Moreover, there is no efficient method  to check the correctness of the result.  Another drawback  is that samples  need to be drawn  randomly, this is significantly more complicated to realize by hardware. 
Regarding the numerical results, runtimes start to be more efficient than standard FFT for $m=50$ and $N >2^{17}$, see \cite{GIIS14, Sch13} for most of the considered  algorithms, and for  $m=50$ and $N >2^{14}$ for the algorithm in \cite{HIKP12b}. Another   experiment shows for fixed $N=2^{22}$ that several sparse FFT algorithms start to pay off for $m <2200$ while the algorithm by \cite{HIKP12b} is more efficient for $m \le 2^{17}$, \cite{Sch13}. Newest algorithms are even faster, but the probability to provide the exact result decreases with  larger $m$.
For a nice overview of the techniques of randomized sparse Fourier transforms we refer to \cite{GIIS14}.
\smallskip

Deterministic sparse FFT algorithms proposed e.g. in \cite{Aka10,Aka14, I10, I13, LWC13} are applicable to (approximately) sparse  or so-called norm bounded signals. Iwen \cite{I10,I13} considered  sparse $m$-term approximations  of the discrete Fourier transform and achieved an 
$\ell^{2}, \, \ell^{1}/\sqrt{m}$ error bound  with ${\cal O}(m^{2} \log^{4} N)$ operations.
Similar runtime is achieved for the deterministic algorithm in \cite{LWC13}, where accessibility not only to the usual signal  samples but also to time-shifted samples of the input function is assumed. 
The deterministic algorithm in \cite{Aka14} for signals being norm bounded by $\sqrt{m}$, the evaluation of $\delta$-approximations of all $\tau$-significant frequencies, has polynomial costs in $\log N, \, m, \, 1/\tau$ and  $1/\delta$. 
\smallskip

Finally, we mention sparse FFT algorithms based on Prony's method that are completely deterministic and can be operated with ${\cal O}(m^{3})$ operations (independent of the signal size $N$), see e.g. \cite{HKPV13, PT14},  but usually with an unstable numerical behavior. A better numerical performance can be achieved by randomization leading to a complexity $m^{5/3} \log^{2} N$, see \cite{HKPV13}.
\medskip

Compared to the approaches above, we restrict ourselves  to signal vectors possessing a small support interval.
Vectors with small support appear in different applications as e.g. in X-ray microscopy, where compact support is a frequently  used a-priori condition  in phase retrieval, as well as in computer tomography reconstructions. 
\medskip

{\bf Notations.}
First, we fix some notations.
For a vector $\mathbf x\in\C^N$ of length $N$, let its discrete Fourier transform be defined by $\widehat{\mathbf x}={\mathbf F}_N \mathbf x$, where the Fourier matrix ${\mathbf F}_N\in\C^{N\times N}$ is given by
\[{\mathbf F}_N := \left( \omega_N^{jk} \right)_{j,k=0}^{N-1}, \qquad \omega_N := \e^{\frac{-2\pi\ii}{N}}.\]

Let the {\it support  length} $m = | {\rm supp} \, {\bf x}|$ of ${\bf x} \in \C^N$ be defined as the minimal integer $m$ for which there exists a 
 $\mu\in\{0,\dots,N-1\}$ such that the components $x_k$ of $\mathbf x$ vanish for all $k\notin I := \{(\mu+r)\mod N,\quad r=0,\dots,m-1\}$. The index set $I$ is called {\it support interval} of ${\bf x}$. Obviously, the support length $m$ of ${\bf x}$ is
 an upper bound for its sparsity $\| {\bf x} \|_{0}$, i.e., the number of nonzero components of ${\bf x}$, since there may be zero components of ${\bf x}$ inside the support interval $I$. However, we always have $x_{\mu} \neq 0$ and $x_{\mu+m-1} \neq 0$.
Observe that  the support length  $m$ of a vector $\mathbf x \in {\C}^{N}$ is always uniquely defined  
while the support interval  itself resp.\ the first support index  $\mu$ needs not to be unique. 
For example, considering the vector ${\bf x} = (x_{k})_{k=0}^{N-1} \in C^{N}$ of even length $N$ given by 
$x_{1} = x_{N/2+1} =1$ and $x_{k} =0$ for $k \in \{ 0, \ldots, N-1\} \setminus \{ 1, N/2 +1 \}$, we find for ${\bf x}$ the support length $m= N/2+1$ while the support interval can be chosen either as $\{ 1, \ldots , N/2+1 \} $ or as 
$\{ N/2+1, \ldots , N-1, 0, 1 \}$.
However, if $m \le \frac{N}{2}$, then the support interval and hence also the first support  index $\mu$ are uniquely determined. 
\medskip

{\bf Our contribution.}
In this paper, we will derive new deterministic algorithms to reconstruct a vector ${\bf x}\in\C^N$ with support length $m< N$
from its discrete Fourier transform $\widehat{\bf x} \in {\C}^{N}$.
In case of exact data, we will use less than $4m$ Fourier samples  and ${\cal O}(m \log_{2} m)$ arithmetical operations  to recover ${\bf x}$. Thus, the algorithm already starts to pay off for $m < \frac{N}{4}$.

In case of noisy Fourier measurements, we will
 use  ${\cal O}(m \log_{2} N)$ arithmetical operations. 
More exactly, we will employ one or several FFTs of size less than $4m$ 
as well as the computation of $\lfloor \log_{2} \frac{N}{m} \rfloor-1$ scalar products of length $m$ to recover the full vector in a stable way.

In both cases, for exact as well as for noisy measurements, the algorithms are based on the idea that for $N=2^{J}$ the nonzero components of ${\bf x}$ can already be computed by evaluating a periodization of ${\bf x}$ with length $2^{L}  \ge m$. Thus, for the complete reconstruction of ${\bf x}$,
we only need to compute in a second step the correct support interval resp.\ the first support index of ${\bf x}$.
\medskip

{\bf Organization of the paper.}
In Section 2, we recall some properties of the discrete Fourier transform that will be used in our approach.
Section 3 is devoted to the sparse FFT algorithm for $m$-sparse vectors in case of exact Fourier measurements. Section 4 considers a more stable variant of the different steps of the algorithm that make the method robust in presence of noise, and even improves the accuracy of the resulting vector in comparison to the usual FFT method while using only ${\cal O}(m \log N)$ arithmetical operations.
Finally, we present our numerical experiments in Section 5.

\section{Preliminaries}

%

Throughout the paper let $N:=2^J$ with some $J>0$. 

We consider the periodizations $\mathbf x^{(j)}\in\C^{2^j}$ of $\mathbf x$,
\begin{align}\label{periodization}
\mathbf x^{(j)} = (x_k^{(j)})_{k=0}^{2^j-1} = \left( \sum_{\ell=0}^{2^{J-j}-1} x_{k+2^j\ell} \right)_{k=0}^{2^j-1}
\end{align}
for $j=0,\dots,J$. Obviously, $\mathbf x^{(0)} = \sum_{k=0}^{N-1} x_k$ is the sum of all components of $\mathbf x$, 
$\mathbf x^{(1)} = ( \sum_{k=0}^{N/2-1} x_{2k}, \sum_{k=0}^{N/2-1} x_{2k+1} )^T$ and $\mathbf x^{(J)} = \mathbf x$. We recall the following relationship for the discrete Fourier transform of the vectors $\mathbf x^{(j)}$, $j=0,\dots,J$, in terms of $\widehat{\mathbf x}$.

\begin{lemma} \label{lemma1}
For the vectors ${\mathbf x}^{(j)} \in \C^{2^j}$, $j=0,\dots,J$, in {\rm (\ref{periodization})}, we have the discrete Fourier transform
\[\widehat{\mathbf x}^{(j)} := {\mathbf F}_{2^j} \mathbf x^{(j)} = (\widehat{x}_{2^{J-j}k})_{k=0}^{2^j-1},\]
where $\widehat{\mathbf x} = (\widehat{x}_k)_{k=0}^{N-1} = {\mathbf F}_N \mathbf x$ is the Fourier transform of $\mathbf x \in \C^N$.
\end{lemma}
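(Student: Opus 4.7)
The plan is to unfold the definitions and interchange the order of summation, using the fact that the primitive $2^j$-th root of unity is a power of $\omega_N$. Specifically, since $N = 2^J$, we have $\omega_{2^j} = \omega_N^{2^{J-j}}$, which is the bridge between the short DFT on $\mathbf{x}^{(j)}$ and the full DFT on $\mathbf{x}$.

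First I would write, for fixed $k \in \{0,\dots,2^j-1\}$,
\[
\widehat{x}^{(j)}_k = \sum_{r=0}^{2^j-1} x^{(j)}_r \, \omega_{2^j}^{rk} = \sum_{r=0}^{2^j-1} \sum_{\ell=0}^{2^{J-j}-1} x_{r+2^j \ell}\, \omega_{2^j}^{rk},
\]
substituting the definition of the periodization $\mathbf{x}^{(j)}$ from \eqref{periodization}. The key observation is then that $\omega_{2^j}^{rk} = \omega_N^{2^{J-j}rk}$, and moreover that one may freely insert the factor $\omega_N^{2^{J-j}(2^j \ell)k} = \omega_N^{2^J \ell k} = 1$. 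This allows the exponent to be rewritten in the unified form $\omega_N^{2^{J-j}(r + 2^j \ell)k}$.

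Next I would perform the change of index $n := r + 2^j \ell$. As $r$ runs through $\{0,\dots,2^j-1\}$ and $\ell$ through $\{0,\dots,2^{J-j}-1\}$, $n$ runs bijectively through $\{0,\dots,N-1\}$, so the double sum collapses into the single sum
\[
\widehat{x}^{(j)}_k = \sum_{n=0}^{N-1} x_n\, \omega_N^{2^{J-j} n k} = \widehat{x}_{2^{J-j}k},
\]
which is exactly the claim.

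There is really no serious obstacle here; the lemma is essentially a bookkeeping statement about roots of unity. The only place that requires a bit of care is the manipulation of the exponent to merge $r$ and $2^j \ell$ into a single index $n$, and this relies crucially on $N$ being of the form $2^J$ so that $\omega_{2^j}$ is a power of $\omega_N$ and $\omega_N^{2^J} = 1$.
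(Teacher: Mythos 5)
Your proof is correct and follows essentially the same route as the paper: substitute the periodization, use $\omega_{2^j} = \omega_N^{2^{J-j}}$ together with $\omega_N^{2^J\ell k}=1$, and collapse the double sum via $n = r + 2^j\ell$. You simply spell out the reindexing step that the paper leaves implicit.
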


\begin{proof}
By definition, we find for the components $\widehat{x}_k^{(j)}$ of $\widehat{\mathbf x}^{(j)}$
\begin{align*}
\widehat{x}_k^{(j)} :&= \sum_{r=0}^{2^j-1} x_r^{(j)}\, \omega_{2^j}^{rk} = \sum_{r=0}^{2^j-1} \sum_{\ell=0}^{2^{J-j}-1} x_{r+ 2^j \ell}\, \omega_N^{2^{J-j}rk}\\
	&= \sum_{n=0}^{N-1} x_n\, \omega_N^{2^{J-j} n k} = \widehat{x}_{2^{J-j}k}, \qquad k=0,\dots,2^j-1.
\end{align*}
Thus the assertion follows.
\end{proof}

Lemma \ref{lemma1} tells us that for a given vector $\widehat{\bf x}$  the Fourier transform of the periodized vectors $\widehat{\bf x}^{(j)}$ needs not to be computed  but is just obtained by picking suitable components of $\widehat{\bf x}$.
Further, we want to make use of the following simple observation.

\begin{lemma} \label{lemma2}
Let ${\bf x} = (x_{k})_{k=0}^{N-1}\in {\C}^{N}$, $N= 2^{J}$, and let 
${\bf y} = (y_{k})_{k=0}^{N-1} \in {\C}^{N}$ be its shifted version  with components $y_{k}:=x_{(k+2^{j} \nu) {\rm mod} N}$, $k=0, \ldots , N-1$, for some $j \in \{ 0, \ldots , J-1 \}$ and  $\nu \in \{ 0, \ldots , 2^{J-j}-1 \}$.
Then the components of the Fourier transformed vectors $\widehat{\bf x} = {\bf F}_{N} {\bf x}$, $\widehat{\bf y} = {\bf F}_{N} {\bf y}$ satisfy
$$ \widehat{y}_{l} = \omega_{2^{J-j}}^{-l\nu} \, \widehat{x}_{l}, \qquad l=0, \ldots , N-1. $$
In particular, for $j=J-1$ and $\nu =1$ we have $y_{k}= x_{(k+N/2) {\rm mod}  N}$, $k=0, \ldots , N-1$, with
$$ \widehat{x}_{2k} = \widehat{y}_{2k}, \qquad \widehat{x}_{2k+1} = - \widehat{y}_{2k+1}, \qquad  k=0, \ldots , \frac{N}{2} -1. $$
\end{lemma}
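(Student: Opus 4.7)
The plan is to prove Lemma 2 as a direct application of the shift property of the DFT. First I would write out the definition of $\widehat{y}_l$ using the formula $\widehat{y}_l = \sum_{k=0}^{N-1} y_k \, \omega_N^{lk}$ and substitute $y_k = x_{(k + 2^j \nu) \bmod N}$. Then I would introduce the new summation index $n = (k + 2^j \nu) \bmod N$; because $\omega_N^{lk}$ is $N$-periodic in $k$, the sum over one full residue system is invariant under this shift, so I can replace $k$ by $n - 2^j \nu$ in the exponent and still sum $n$ from $0$ to $N-1$. This yields $\widehat{y}_l = \omega_N^{-l \cdot 2^j \nu} \sum_{n=0}^{N-1} x_n \, \omega_N^{ln} = \omega_N^{-l \cdot 2^j \nu} \, \widehat{x}_l$.

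Next I would simplify the prefactor. Since $\omega_N = \mathrm{e}^{-2\pi\ii/N}$ and $N = 2^J$, one has
\[
\omega_N^{2^j} = \mathrm{e}^{-2\pi\ii \cdot 2^j / 2^J} = \mathrm{e}^{-2\pi\ii/2^{J-j}} = \omega_{2^{J-j}},
\]
hence $\omega_N^{-l \cdot 2^j \nu} = \omega_{2^{J-j}}^{-l\nu}$, which gives the first claimed identity.

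For the special case $j = J-1$, $\nu = 1$, we have $2^j \nu = N/2$ and $2^{J-j} = 2$, so the prefactor becomes $\omega_2^{-l} = \mathrm{e}^{\pi \ii l} = (-1)^l$. Thus $\widehat{y}_l = (-1)^l \widehat{x}_l$, which reads as $\widehat{y}_{2k} = \widehat{x}_{2k}$ and $\widehat{y}_{2k+1} = -\widehat{x}_{2k+1}$ for $k = 0, \dots, \tfrac{N}{2}-1$; rewriting these two equalities in the other direction gives the displayed formulas in the statement.

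There is no real obstacle here; the only point that needs a small bit of care is justifying the reindexing modulo $N$, but this is immediate from the $N$-periodicity of the map $k \mapsto \omega_N^{lk}$, so the entire proof is essentially a two-line calculation followed by specialization of parameters.
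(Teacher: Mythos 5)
Your proposal is correct and follows essentially the same route as the paper's proof: expand $\widehat{y}_l$ by definition, reindex the sum modulo $N$ to pull out the factor $\omega_N^{-l\cdot 2^j\nu}=\omega_{2^{J-j}}^{-l\nu}$, and specialize to $j=J-1$, $\nu=1$ where the factor becomes $(-1)^l$. The only difference is that you make the justification of the reindexing (periodicity of $k\mapsto\omega_N^{lk}$) explicit, which the paper leaves implicit.
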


\begin{proof}
With $\widehat{\bf x} = {\bf F}_{N} {\bf x} = (\widehat{x}_{l})_{l=0}^{N-1}$ and  $\widehat{\bf y} = {\bf F}_{N} {\bf y} = (\widehat{y}_{l})_{l=0}^{N-1}$ we obtain
$$ \widehat{y}_{l} = \sum_{k=0}^{N-1} y_{k} \, \omega_{N}^{lk} = \sum_{k=0}^{N-1} x_{(k+2^{j} \nu) {\rm mod} N} \, \omega_{N}^{lk} =
\sum_{k=0}^{N-1} x_k \, \omega_{N}^{l(k-2^{j} \nu)} = \omega_{2^{J-j}}^{-l\nu} \, \widehat{x}_{l}.$$
For $j=J-1$ and $\nu =1$ the assertion follows with $\omega_{2^{J-j}}^{-l\nu} = \omega_{2}^{-l} = (-1)^{l}$.
\end{proof}

\section{Reconstruction of $\mathbf x$ from exact Fourier data}

We assume that the Fourier transformed vector  $\widehat{\bf x} = {\bf F}_{N} {\bf x}$ is given, and that 
the support length $m$ of ${\bf x}$ or an upper bound $m$ of it is known a-priori. Now, we apply the following simple procedure to recover $\mathbf x$.

Let $2^{L-1}< m \le 2^L$. Then, by Lemma 1, $\widehat{\mathbf x}^{(L+1)} = (\widehat{x}_{2^{J-(L+1)}k})_{k=0}^{2^{L+1}-1}$ is the Fourier transform of $\mathbf x^{(L+1)}$. In a first step, we  compute $\mathbf x^{(L+1)}$ using inverse FFT of length $2^{L+1}$.

Since $|\supp \mathbf x|=m\le 2^L$ by assumption, it follows that $\mathbf x^{(L+1)}$ has already the same support length as ${\bf x}$ since for each $k \in \{ 0, \ldots , 2^{L+1} -1 \}$ the sum in
\begin{equation}\label{1}
x_k^{(L+1)} = \sum_{\ell=0}^{2^{J-L-1}-1} x_{k+2^{L+1} \ell} 
\end{equation} 
contains at most one nonvanishing term. 
Moreover, also the support itself, or equivalently the first support index $\mu = \mu^{(L+1)}$ of $\mathbf x^{(L+1)}$, is uniquely determined.
\medskip

Thus, in order to recover ${\bf x}$ from ${\bf x}^{(L+1)}$, we only need to compute the correct first support index $\mu^{(J)}$ of ${\bf x}$, such that 
the components of ${\bf x}$ are determined by 
$$ x_{(\mu^{(J)} +k) {\rm mod} \, N} = \left\{ \begin{array}{ll} x^{(L+1)}_{(\mu^{(L+1)} +k) {\rm mod} \, 2^{L+1}} & k=0, \ldots , m-1, \\
0 & k=m, \ldots , N-1.
\end{array} \right.
$$
From (\ref{1}) we know that $\mu^{(J)}$ is of the form $\mu^{(J)} = \mu^{(L+1)} + 2^{L+1}\nu $ for some $\nu \in 
\{ 0, 1, \ldots , 2^{J-L-1}-1 \}$.

\begin{theorem}
Let $\mathbf x\in \C^N$, $N=2^{J}$, have support length $m$ (or a support length bounded by $m$) with $2^{L-1} <  m \le 2^L$.
For $L < J-1$, let ${\bf x}^{(L+1)}$ be the $2^{L+1}$-periodization  of ${\bf x}$.  Then ${\bf x}$ can be uniquely recovered from ${\bf x}^{(L+1)}$  and one nonzero component of the vector $(\widehat{x}_{2k+1})_{k=0}^{N/2-1}$.
\end{theorem}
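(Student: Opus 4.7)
The plan is to recover $\mathbf{x}$ in two stages: first extract the nonzero values and their cyclic order from $\mathbf{x}^{(L+1)}$, and then use the given nonzero odd-indexed Fourier coefficient to pin down the global offset of the support. Since $m \le 2^L < 2^{L+1}$, the sum in (\ref{1}) contains at most one nonzero summand per index, so $\mathbf{x}^{(L+1)}$ already displays the $m$ nonzero values of $\mathbf{x}$ in their correct cyclic order; moreover, since $m \le 2^{L+1}/2$, the first support index $\mu^{(L+1)}$ of $\mathbf{x}^{(L+1)}$ is uniquely determined (cf.\ the uniqueness remark in the introduction). As noted directly before the theorem, the true first support index of $\mathbf{x}$ in $\C^N$ must have the form $\mu^{(J)} = \mu^{(L+1)} + 2^{L+1}\nu$ for some $\nu \in \{0, \ldots, 2^{J-L-1}-1\}$, so everything reduces to determining this one integer $\nu$.

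For the second stage, I would introduce a candidate vector $\tilde{\mathbf{x}} \in \C^N$ carrying the nonzero values of $\mathbf{x}^{(L+1)}$ but placed at the positions dictated by the choice $\nu = 0$, i.e., with first support index $\mu^{(L+1)}$. By construction $\mathbf{x}$ is the cyclic shift of $\tilde{\mathbf{x}}$ by $2^{L+1}\nu$ positions, so that $\tilde{x}_k = x_{(k + 2^{L+1}\nu) \bmod N}$. Lemma \ref{lemma2} applied with $j = L+1$ then gives the Fourier-side relation
\[ \widehat{x}_l \;=\; \omega_{2^{J-L-1}}^{\,l\nu}\, \widehat{\tilde{x}}_l, \qquad l = 0,\dots,N-1. \]
Now let $l = 2k+1$ be the odd index for which $\widehat{x}_{2k+1}$ is given to be nonzero; the identity forces $\widehat{\tilde{x}}_{2k+1} \neq 0$, and since $\tilde{\mathbf{x}}$ is fully known we can evaluate $\widehat{\tilde{x}}_{2k+1}$ directly. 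The ratio $\widehat{x}_{2k+1}/\widehat{\tilde{x}}_{2k+1}$ then equals $\omega_{2^{J-L-1}}^{(2k+1)\nu}$. The crucial algebraic observation is that $2^{J-L-1}$ is a power of two while $2k+1$ is odd, so $\gcd(2k+1, 2^{J-L-1}) = 1$; hence multiplication by $2k+1$ is a bijection on the residues modulo $2^{J-L-1}$, and $\nu \in \{0, \ldots, 2^{J-L-1}-1\}$ is uniquely recovered from the ratio.

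The one delicate point — and the reason an \emph{odd}-indexed coefficient is specifically singled out — is verifying that a nonzero $\widehat{x}_{2k+1}$ actually exists for every nonzero $\mathbf{x}$ satisfying the hypothesis, so that the recipe is not vacuous. If every odd-indexed Fourier coefficient vanished, the special case $j = J-1$, $\nu = 1$ of Lemma \ref{lemma2} would force $\mathbf{x}$ to coincide with its own cyclic shift by $N/2$; but the assumption $L < J-1$ gives $m \le 2^L < 2^{J-1} = N/2$, and a nonzero vector of support length strictly less than $N/2$ cannot be invariant under a shift by $N/2$. Hence at least one odd-indexed Fourier coefficient is nonzero, and the reconstruction is well defined.
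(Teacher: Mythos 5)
Your proof is correct and follows essentially the same route as the paper's: reduce the problem to the single unknown shift $\nu$ with $\mu^{(J)}=\mu^{(L+1)}+2^{L+1}\nu$, apply Lemma \ref{lemma2} to compare one nonzero odd-indexed Fourier coefficient of $\mathbf x$ with that of the $\nu=0$ candidate, and use $\gcd(2k+1,2^{J-L-1})=1$ to recover $\nu$ uniquely from the resulting root of unity. The only point where you diverge is the non-vacuousness argument for the existence of a nonzero $\widehat{x}_{2k+1}$, which the paper handles separately in the remark following Algorithm \ref{alg1} by counting zeros of a nonnegative trigonometric polynomial of degree $\le m-1$, whereas your shorter argument via invariance under the shift by $N/2$ is a clean alternative.
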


\begin{proof}
Let ${\bf x}^{(L+1)}$  have the support interval $\{ (\mu^{(L+1)} +r) \mod 2^{L+1}, \, r=0, \ldots , m-1 \}$. Since $m \le 2^{L}$, this support interval is uniquely determined. Further, we know that by construction  the first index $\mu^{(J)}$ of the support interval of ${\bf x}$  has the form $\mu^{(J)} = \mu^{(L+1)} + 2^{L+1} \nu$ for some $\nu \in \{ 0, \ldots , 2^{J-L-1}-1 \}$.
We now consider the vector ${\bf u}^0 \in {\C}^{N}$ that is given by the components 
$$ u^0_{\mu^{(L+1)} + k} = \left\{ \begin{array}{ll}
x^{(L+1)}_{(\mu^{(L+1)} +k) {\rm mod}  2^{L+1}} & k=0, \ldots , m-1, \\
0 & k=m, \ldots , N-1. \end{array} \right.
$$
The vector ${\bf u}^0$ is obtained  as one possible vector in ${\C}^{N}$ with support length $m$ possessing  the $2^{L+1}$-periodization  ${\bf x}^{(L+1)}$.
Obviously the first index of the support interval of ${\bf u}^0$  is $\mu^{(L+1)}$. Therefore, all further vectors in ${\C^{N}}$ with a support length $m$ and possessing  the $2^{L+1}$-periodization  ${\bf x}^{(L+1)}$ can be written as  a shifted version of ${\bf u}^0$ of the form 
$${\bf u}^{\nu} := (u_{k}^{\nu})_{k=0}^{N-1} \qquad  \mbox{with} \qquad  u_{k}^{\nu} = u^0_{(k + 2^{L+1} \nu) {\rm mod} N}, \; k=0, \ldots, N-1,
$$
for some $\nu \in \{ 1, \ldots , 2^{J-L-1}-1 \}$. Thus,  the desired vector  ${\bf x}$ is contained in the set  $\{ {\bf u}^{\nu}: \, \nu=0, \ldots , 2^{J-L-1}-1 \}$. 
\smallskip

It remains to find $\nu$ such that ${\bf x} = {\bf u}^{\nu}$.
From Lemma \ref{lemma2} it follows for the components of the Fourier transform $\widehat{\bf u}^\nu = (\widehat{u}_{l}^{\nu})_{l=0}^{N-1}$
 that $\widehat{u}^{\nu}_{l} = \omega_{2^{J-L-1}}^{-l \nu} \, \widehat{u}^0_{l}$.
 
Choosing now an odd-indexed nonzero Fourier value $\widehat{x}_{2k_{0}+1}$ of the given vector $\widehat{\bf x}$, we can compare it to 
the corresponding component of $\widehat{\bf u}^{0}$,
$$ \widehat{u}^0_{2k_{0}+1} = \sum_{r=0}^{m-1} x_{(\mu^{(L+1)} +r){\rm mod}2^{L+1}}^{(L+1)} \, \omega_{N}^{(\mu^{(L+1)} + r)(2k_{0}+1)}
$$
and obtain the correct value $\nu$ from 
$\omega_{2^{J-L-1}}^{-(2k_0+1)\nu} = \widehat{x}_{2k_0+1}/ \widehat{u}^0_{2k_0+1}$. 
We remark that $\nu \in \{ 0, \ldots , 2^{J-L-1} -1 \}$ is indeed uniquely determined by $\omega_{2^{J-L-1}}^{-(2k_0+1)\nu}$ since gcd$(2k_0+1, 
2^{J-L-1}) =1$, i.e.,  $2k_0+1$ and $2^{J-L-1}$ are mutually prime.
Finally, the vector ${\bf x}$ is obtained as ${\bf x} = {\bf u}^{\nu}$. \end{proof}

We summarize the algorithm to evaluate ${\bf x}$ from $\widehat{\bf x}$ for exact Fourier data as follows.

\begin{algorithm}\label{alg1} \null (Sparse FFT for vectors with small support for exact Fourier data) \\
\textbf{Input:} $\widehat{\mathbf x}\in\C^N$, $N=2^J$, $|{\rm supp}\,  {\bf x}| \le m<N$.

\begin{itemize}
\item Compute $L$ such that $2^{L-1}< m \le 2^L$, i.e., $L:= \lceil \log_{2} m \rceil$.
\item If $L=J$ or $L=J-1$, compute $\mathbf x= {\mathbf F}_N^{-1} \widehat{\bf x}$ using an FFT algorithm of length $N$.
\item If $L<J-1$:
\begin{enumerate}
\item Choose $\widehat{\bf a} :=(\widehat{x}_{2^{J-(L+1)}k})_{k=0}^{2^{L+1}-1}$ and compute 
$$\mathbf x^{(L+1)}:={\mathbf F}_{2^{L+1}}^{-1} \widehat{\mathbf a}$$ 
using an FFT algorithm for the inverse discrete Fourier transform of length $2^{L+1}$.

\item Determine the first support index $\mu^{(L+1)}\in\{0,\dots,2^{L+1}-1\}$ of ${\bf x}^{(L+1)}$ such that $x_{\mu^{(L+1)}}^{(L+1)}\neq0$ and $x_k^{(L+1)}=0$ for $k\notin\{(\mu^{(L+1)}+r){\rm mod}\, 2^{L+1}, \,  r=0,\dots, m-1\}$.

\item
Choose a Fourier component $\widehat{x}_{2k_{0}+1} \neq0$ of $\widehat{\bf x}$ and compute the sum
$$\widehat{u}^0_{2k_{0}+1}:=\sum_{\ell=0}^{m-1} x^{(L+1)}_{(\mu^{(L+1)}+\ell) {\rm mod} \,  2^{L+1}} \, \omega_{N}^{(2k_{0}+1)(\mu^{(L+1)}+\ell)}. $$

\item Compute the quotient $b:=\widehat{x}_{2k_{0}+1}/\widehat{u}^0_{2k_{0}+1}$ that is by construction of the form $b= \omega_{2^{J-L-1}}^p$ for some $p \in \{
0, \ldots , 2^{J-L-1}-1 \}$, and find $\nu \in \{
0, \ldots , 2^{J-L-1}-1 \}$ such that $(2k_{0}+1) \, \nu = p\, {\rm mod}\, 2^{J-L-1}$.

\item Set $\mu^{(J)} := \mu^{(L+1)} + 2^{L+1} \nu$, 
 and $\mathbf x := (x_{k})_{k=0}^{N-1}$ with entries
\begin{align*}
x_{(\mu^{(J)}+\ell)\text{\rm mod}\, N} :=\left\{ \begin{array}{ll} x^{(L+1)}_{(\mu^{(L+1)}+\ell)\text{\rm mod}\,  2^{L+1}} & \ell=0,\dots,m-1,\\ 0 & \ell=m, \ldots , N-1.  \end{array} \right.
\end{align*}
\end{enumerate}
\end{itemize}
\textbf{Output:} $\mathbf x$.
\end{algorithm}

We simply observe that the proposed algorithm has an arithmetical complexity of ${\cal O}(m \log m)$.
In step 1 we employ an FFT algorithm of length $2^{L+1} < 4m$ having this complexity. All other steps
can be performed with ${\cal O} (m)$ or less operations. 
Particularly, the support interval (resp.\ the first support index $\mu^{(L+1)}$ 
of ${\bf x}^{(L+1)}$) in step 2 can e.g.\ be found by computing the local energies $e_{k} := \sum_{\ell=k}^{m+k-1} |x_{\ell \,{\rm mod} \, 2^{L+1}}^{(L+1)}|^{2}$ for $k=0, \ldots , 2^{L+1}-1$, and taking $\mu^{(L+1)} := \mbox{argmax}_{k} \, e_{k}$. Here, $e_{0}$ can be computed by at most ${\cal O}(m)$ operations, and 
all further energies by using the recursion $e_{k+1} = e_{k} - |x_{k}|^{2}+ |x_{k+m}|^{2}$, where we need to keep in mind that there are only
$m$ nonzero entries in ${\bf x}^{(L+1)}$. 

The complete algorithm  requires less then $4m$ Fourier values for the 
overall evaluation.

Let us give some further remarks on Algorithm \ref{alg1}.

\begin{remark} \hspace{1cm} 
\smallskip

It is always possible to choose a nonzero Fourier component of the vector \linebreak 
$ (\widehat{x}_{2k+1})_{k=0}^{N/2-1}$. This can be seen as follows.
Let ${\rm supp} \, {\bf x}= \{\mu^{(J)}, \mu^{(J)}+1 \,{\rm mod} \, N, \dots, \mu^{(J)}+m-1 \,{\rm mod} \, N\}$ be the support of ${\bf x}$, where $m \le 2^{L} \le N/4$. 
 Then the trigonometric polynomial
\begin{equation}\label{p1}
 p(\omega) = \left| \sum_{k=0}^{N-1} x_{k} \, \e^{-\ii\omega k} \right|^{2}
= \left| \e^{-\ii  \omega \mu^{(J)}} \sum_{\ell=0}^{m-1} x_{(\mu^{(J)} + \ell) \, {\rm mod} \, N} \e^{-\ii \omega \ell} \right|^{2} 
\end{equation}
is real, non-negative and of degree $\le m-1$. Hence it possesses at most $m-1$ pairwise different zeros, where 
all zeros are at least double zeros. Observing now that 
$|\widehat{x}_{k} |^{2} = p\left( \frac{2 \pi k}{N}\right)$, we can conclude that not
all $\widehat{x}_{2k+1}$, $k=0, \ldots , N/2-1$, can be zeros of $p(\omega)$ since $N/2 \ge 2m $.
\end{remark}

\begin{remark} \label{rem3.4}\hspace{1cm} 
\smallskip

As shown in the remark above, there can be at most $m-1$ zero components in the vector  $ (\widehat{x}_{2k+1})_{k=0}^{N/2-1}$.
However, for a stable computation of the correct first support index $\mu^{(J)}$, the Fourier value $\widehat{x}_{2k_{0}+1}$ used in Algorithm {\rm \ref{alg1}} should have large modulus.
This may be ensured by taking 
$$ k_{0} := {\rm argmax} \, \{ |\widehat{x}_{2k+1}|^{2}: \, k=0, \ldots , N/2-1 \}. $$
Unfortunately, this procedure is quite costly. Instead, we may compute  
$$ k_{0}^{(L+1)} := {\rm argmax} \, \{ |\widehat{x}_{2^{J-(L+1)}k}|^{2}: \, k=0, \ldots , 2^{L+1}-1 \}. $$
Then we have for the trigonometric polynomial in {\rm (\ref{p1})},
$$| \widehat{x}_{2^{J-(L+1)} k_{0}^{(L+1)}} |^{2} = \textstyle p \left(\frac{2\pi k_{0}^{(L+1)}}{2^{L+1}} \right) = \max\limits_{k=0, \ldots , 2^{L+1}-1} p \left(\frac{2\pi k}{2^{L+1}} \right) >0,$$
and it is likely that $p\left(\frac{2\pi k_{0}^{(L+1)}}{2^{L+1}}\right)$ is close to the global maximum $\| p \|_{\infty}$ of $p(\omega)$.
In step 3 of Algorithm {\rm \ref{alg1}} we may now choose the one neighboring  Fourier value with maximal modulus,
 i.e., either  $\widehat{x}_{2^{J-(L+1)} k_{0}^{(L+1)}+1}$ or $\widehat{x}_{2^{J-(L+1)} k_{0}^{(L+1)}-1}$.

If $p(\omega)$ attains its global maximum at some point $\omega_{0} \in \Big[\frac{2\pi (k_{0}^{(L+1)}-1/2)}{2^{L+1}}, \frac{2\pi (k_{0}^{(L+1)}+1/2)}{2^{L+1}}\Big)$, then it follows that $\Big|\omega_{0} - \frac{2\pi k_{0}^{(L+1)}}{2^{L+1}}\Big| \le \frac{\pi}{2^{L+1}}$.
With the above choice of $\widehat{x}_{2k_{0}+1}$ with $ k_{0}=2^{J-L-2} k_{0}^{(L+1)}$ or $k_{0}= 2^{J-L-2} k_{0}^{(L+1)} -1$,  we can further assume that 
$\left|\omega_{0} - \frac{2\pi (2k_{0}+1)}{N}\right| \le \frac{\pi}{2^{L+1}}$, and applying the Lemma of Ste\v{c}kin (see e.g. \cite{Gr99}), we find
$$ \textstyle |\widehat{x}_{2k_{0}+1}|^{2} = p \left(\frac{2\pi (2k_{0}+1)}{N} \right) \ge \| p \|_{\infty} \cos\left(\frac{\pi(m-1)}{2^{L+1}}\right) >0. $$
\end{remark}

\section{Reconstruction of $\mathbf x$ from noisy Fourier data}

Let us now assume that the given Fourier data are noisy, i.e., they are perturbed by uniform noise,  
\begin{equation}\label{e1} \widehat{y}_{k} = \widehat{x}_{k} + \epsilon_{k} 
\end{equation}
with $|\epsilon_{k}| \le \delta$. 
Similarly as before, we want to reconstruct ${\bf x}$ from the noisy Fourier vector $\widehat{\bf y}$ using the a-priori knowledge that 
${\bf x}$ has a support interval of length $m < N$.

For that purpose, we propose a stabilized variant of Algorithm \ref{alg1}.
The stabilization regards the following essential steps in the algorithm, namely \\
(1) the correct determination of the support interval of ${\bf x}^{(L+1)}$, \\
(2) the correct determination of the support interval resp.\ the first support index $\mu^{(J)}$ of the desired vector ${\bf x}$, and \\
(3) the evaluation of the nonzero components of ${\bf x}$  within the support interval that may be improved by employing more Fourier values $\widehat{\bf y}$ than in the case of exact data.
\medskip

\noindent
{\bf (1) Stable determination of the support interval of ${\bf x}^{(L+1)}$.}

For that purpose we use the following observation.

\begin{theorem}\label{theo41}
Let $\mathbf x\in \C^N$, $N=2^{J}$, have support length $m$ (or a support length bounded by $m$) with $2^{L-1} <  m \le 2^L$.
For $L < J-1$, let ${\bf x}^{(L+1)}= (x^{(L+1)}_{k})_{k=0}^{2^{L+1}-1}$ be the $2^{L+1}$-periodization  of ${\bf x}$ and $\widehat{\bf x}^{(L+1)}=(\widehat{x}_{2^{J-L-1}k})_{k=0}^{2^{L+1}-1}$ its Fourier transform.  
Then, for each shifted vector of the form
$$ \widehat{\bf z}^{(\kappa)} := \left( \widehat{x}_{2^{J-L-1}k + \kappa} \right)_{k=0}^{2^{L+1}-1}, \qquad \kappa = 0, \ldots , 2^{J-L-1} -1, $$
the inverse Fourier transform ${\bf z}^{(\kappa)} = (z_{\ell}^{(\kappa)})_{\ell=0}^{2^{L+1}-1} = {\mathbf F}_{2^{L+1}}^{-1} \widehat{\bf z}^{(\kappa)} $ satisfies
$$ |z_{\ell}^{(\kappa)}| = |x_{\ell}^{(L+1)}| \quad \ell =0, \ldots , 2^{L+1}-1. $$
\end{theorem}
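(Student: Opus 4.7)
The plan is to reinterpret the shifted sampling as a modulation on the time side, after which Lemma \ref{lemma1} applies almost verbatim and the small-support hypothesis kills all cancellation.

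First I would unfold the definition of $\widehat{x}_{2^{J-L-1}k+\kappa}$ and split the exponent:
$$
\widehat{x}_{2^{J-L-1}k+\kappa} = \sum_{n=0}^{N-1} x_n\, \omega_N^{(2^{J-L-1}k+\kappa)n} = \sum_{n=0}^{N-1} \bigl(x_n\, \omega_N^{\kappa n}\bigr)\, \omega_{2^{L+1}}^{kn}.
$$
Setting $\mathbf{v}^{(\kappa)} = (x_n\, \omega_N^{\kappa n})_{n=0}^{N-1}$, the right-hand side is exactly the $k$-th entry of the Fourier transform of the $2^{L+1}$-periodization $\mathbf{v}^{(\kappa),(L+1)}$ by Lemma \ref{lemma1}. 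Hence $\widehat{\mathbf z}^{(\kappa)} = \mathbf{F}_{2^{L+1}}\, \mathbf{v}^{(\kappa),(L+1)}$, and by uniqueness of the inverse DFT we obtain
$$
z_\ell^{(\kappa)} \;=\; \sum_{r=0}^{2^{J-L-1}-1} x_{\ell+2^{L+1}r}\, \omega_N^{\kappa(\ell+2^{L+1}r)}, \qquad \ell = 0,\ldots, 2^{L+1}-1.
$$

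Next I would invoke the support hypothesis exactly as in the derivation leading to equation (\ref{1}). Since $|\mathrm{supp}\,\mathbf{x}| \le m \le 2^{L} < 2^{L+1}$, for each fixed $\ell$ the $2^{J-L-1}$ indices $\ell, \ell+2^{L+1}, \ell+2\cdot 2^{L+1}, \ldots$ hit the support of $\mathbf{x}$ at most once. So either all terms in the sum for $z_\ell^{(\kappa)}$ are zero, or there is a unique $r=r(\ell)$ for which $x_{\ell+2^{L+1}r(\ell)}\ne 0$, in which case
$$
z_\ell^{(\kappa)} \;=\; x_{\ell+2^{L+1}r(\ell)}\, \omega_N^{\kappa(\ell+2^{L+1}r(\ell))}.
$$
The modulation factor has modulus one, and the same unique nonzero summand produces $x_\ell^{(L+1)}$ when $\kappa=0$. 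Taking absolute values yields $|z_\ell^{(\kappa)}|=|x_{\ell+2^{L+1}r(\ell)}|=|x_\ell^{(L+1)}|$, and in the all-zero case both sides vanish.

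There is really no obstacle: the only thing one must be careful about is that the splitting $\omega_N^{2^{J-L-1}kn} = \omega_{2^{L+1}}^{kn}$ works for any $n$ (not just $n<2^{L+1}$), so no modular reduction is hidden, and that the absence of cancellation in the periodized sum is genuinely a consequence of $m \le 2^L < 2^{L+1}$ (strict inequality with the block length), which is what makes the modulation by $\omega_N^\kappa$ invisible to the modulus.
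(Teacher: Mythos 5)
Your proof is correct and arrives at exactly the same key identity $z_\ell^{(\kappa)} = \sum_{j} x_{\ell+2^{L+1}j}\,\omega_N^{\kappa(\ell+2^{L+1}j)}$ as the paper, followed by the identical observation that the small support leaves at most one nonzero summand per $\ell$; the only (cosmetic) difference is that you derive this identity by applying Lemma \ref{lemma1} to the modulated vector $(x_n\,\omega_N^{\kappa n})_{n=0}^{N-1}$, whereas the paper computes ${\mathbf F}_{2^{L+1}}^{-1}\widehat{\bf z}^{(\kappa)}$ directly via orthogonality of the exponentials. This is essentially the same approach.
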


\begin{proof}
By definition, we obtain for the components $z_{\ell}^{(\kappa)}$ of ${\bf z}^{(\kappa)}$
\begin{eqnarray*}
z_{\ell}^{(\kappa)}  &=& \frac{1}{2^{L+1}} \sum_{k=0}^{2^{L+1}-1} \widehat{x}_{2^{J-L-1}k+\kappa} \, \omega_{2^{L+1}}^{-k\ell} \\
&=& \frac{1}{2^{L+1}} \sum_{k=0}^{2^{L+1}-1} \sum_{r=0}^{N-1} x_{r} \, \omega_{N}^{r(2^{J-L-1}k+\kappa)}\, \omega_{2^{L+1}}^{-k\ell} \\
&=&  \frac{1}{2^{L+1}} \sum_{r=0}^{N-1} x_{r} \, \omega_{N}^{r\kappa} \sum_{k=0}^{2^{L+1}-1} \omega_{2^{L+1}}^{k(r-\ell)} \\
&=& \sum_{j=0}^{2^{J-L-1}-1} x_{\ell + 2^{L+1}j} \, \omega_{N}^{(\ell + 2^{L+1}j) \kappa} 
= \omega_{N}^{\ell \kappa} \, \sum_{j=0}^{2^{J-L-1}-1} x_{\ell + 2^{L+1}j} \, \omega_{N}^{2^{L+1}j \kappa}.
\end{eqnarray*}
Since $| {\rm supp} \, {\bf x}| = m < 2^{L+1}$, for each $\ell$ the above sum contains only one value, thus
$|z_{\ell}^{(\kappa)}| = |x_{\ell}^{(L+1)} |$. 
\end{proof}
\medskip

Obviously, we have ${\bf z}^{(0)} = {\bf x}^{(L+1)}$ by definition. We observe that all vectors ${\bf z}^{(\kappa)}$ are constructed from different Fourier components of $\widehat{\bf x}$. This circumstance will be used to stabilize the algorithm.
Applying the noisy measurements $\widehat{y}_{k}$ in (\ref{e1}) instead of $\widehat{x}_{k}$, let $\widehat{\widetilde{\bf z}}^{(\kappa)} 
:= (\widehat{y}_{2^{J-L-1}k+\kappa})_{k=0}^{2^{L+1}-1}$.
As before in Algorithm \ref{alg1}, we compute in a first step the vector $\widetilde{\bf z}^{(0)}= {\bf y}^{(L+1)}$ from the noisy measurements $(\widehat{y}_{2^{J-(L+1)}k})_{k=0}^{2^{L+1}-1}$. 
In order to determine the support interval of ${\bf x}^{(L+1)}$ from the vector $\widetilde{\bf z}^{(0)}$
we consider the energies 
$$ \widetilde{e}_{k}^{(0)} := \sum_{\ell = k}^{m+k-1} |y_{\ell \,{\rm mod} 2^{L+1}}^{(L+1)} |^{2} = \sum_{\ell = k}^{m+k-1} |\widetilde{z}_{\ell \,{\rm mod} \, 2^{L+1}}^{(0)} |^{2}, \qquad k=0, \ldots , 2^{L+1} -1, $$
and take $\mu_{0}^{(L+1)} := \mathop{\rm argmax}\limits_{k} \widetilde{e}_{k}^{(0)}$ as the first estimate for $\mu^{(L+1)}$.
For higher noise levels, we stabilize the computation of $\mu^{(L+1)}$ as follows:
We compute also the vector $\widetilde{\bf z}^{(2^{J-L-2})}$ by applying the inverse FFT to $\widehat{\widetilde{\bf z}}^{(2^{J-L-2})} = ( \widehat{y}_{2^{J-L-2}(2k+1)})_{k=0}^{2^{L+1}-1}$, compute the energies 
$$ \widetilde{e}_{k}^{(1)} : = \sum_{\ell = k}^{m+k-1} |\widetilde{z}_{\ell \, {\rm mod} \,  2^{L+1}}^{(2^{J-L-2})} |^{2}, \qquad k=0, \ldots , 2^{L+1} -1, 
$$
and take $\mu_{1}^{(L+1)} := \mathop{\rm argmax}\limits_{k} \frac{1}{2} ( \widetilde{e}_{k}^{(0)} + \widetilde{e}_{k}^{(1)})$.
If $\mu_{1}^{(L+1)} = \mu_{0}^{(L+1)}$, then this index is taken as the first support index  of ${\bf x}^{(L+1)}$.
 Otherwise, we compute a further vector, e.g.,  $\widetilde{\bf z}^{(2^{J-L-3})} = {\bf F}_{2^{L+1}}^{-1} (\widehat{y}_{2^{J-L-3}(4k+1)})_{k=0}^{2^{L+1}-1}$, evaluate the corresponding energies $\widetilde{e}_{k}^{(2)}$ and
 $\mu_{2}^{(L+1)} :=  \mathop{\rm argmax}\limits_{k} \frac{1}{3} ( \widetilde{e}_{k}^{(0)} + \widetilde{e}_{k}^{(1)}+ \widetilde{e}_{k}^{(2)})$, etc.

\medskip

\noindent
{\bf (2) Stable determination of the support interval of ${\bf x}$.}

In order to stabilize the computation  of the first  support index  $\mu^{(J)}$ of the complete vector ${\bf x} = {\bf x}^{(J)}$
resp.\  the shift $\nu$ such that $\mu^{(J)} = \mu^{(L+1)} - 2^{L+1} \nu$, we employ an iterative procedure, where
at each iteration step, the support interval of the periodized vector ${\bf x}^{(j+1)}$, $j=L+1, \ldots J-1$, is computed from the support interval of the  vector  ${\bf x}^{(j)}$ of half length.

At each iteration step we apply the following procedure.
Observing that ${\bf x}^{(j+1)}$ has the same support length $m$ as ${\bf x}^{(j)}$, and using the  relation
\[x_k^{(j+1)}+x_{k+2^{j}}^{(j+1)} = x_k^{(j)}, \qquad k=0,\dots, 2^{j}-1,\]
 it is sufficient to check whether $\mu^{(j+1)} = \mu^{(j)}$ or 
$\mu^{(j+1)} = \mu^{(j)}+ 2^{j}$, i.e., whether
the first support index $\mu^{(j)}$ of ${\bf x}^{(j)}$ is equal to the first support index $\mu^{(j+1)}$ 
of ${\bf x}^{(j+1)}$, or if the support has to be shifted by $2^{j}$.
We illustrate the two cases in detail in Figure \ref{support}.
\medskip

\begin{figure}[h]
{\small
\null \hspace{10mm} \textbf{First case:} $\mu^{(j+1)} = \mu^{(j)}$.
\begin{center}
\begin{minipage}{\linewidth}
\centering
\begin{tikzpicture}
\draw 	(0,0) -- (3,0);
\foreach \x in {0,3}
\draw[xshift=\x cm] (0pt,-2pt) -- (0pt,2pt);
\draw 	(0,0.1) node[left=5pt]{$\mathbf x^{(j)} $};
\draw[line width=2.5pt] (1,0) -- (2,0);
\end{tikzpicture}
\hspace{0.4cm}
$\Longrightarrow$
\hspace{0.4cm}
\begin{tikzpicture}
\draw 	(0,0) -- (6,0);
\foreach \x in {0,3,6}
\draw[xshift=\x cm] (0pt,-2pt) -- (0pt,2pt);
\draw 	(0,0.1) node[left=5pt]{$\mathbf x^{(j+1)} $};
\draw[line width=2.5pt] (1,0) -- (2,0);
\end{tikzpicture}
\end{minipage}
\end{center}
\begin{center}
\begin{minipage}{\linewidth}
\centering
\begin{tikzpicture}
\draw 	(0,0) -- (3,0);
\foreach \x in {0,3}
\draw[xshift=\x cm] (0pt,-2pt) -- (0pt,2pt);
\draw 	(0,0.1) node[left=5pt]{$\mathbf x^{(j)} $};
\draw[line width=2.5pt] 	(0,0) -- (0.5,0)
					(2.5,0) -- (3,0);
\end{tikzpicture}
\hspace{0.4cm}
$\Longrightarrow$
\hspace{0.4cm}
\begin{tikzpicture}
\draw 	(0,0) -- (6,0);
\foreach \x in {0,3,6}
\draw[xshift=\x cm] (0pt,-2pt) -- (0pt,2pt);
\draw 	(0,0.1) node[left=5pt]{$\mathbf x^{(j+1)} $};
\draw[line width=2.5pt] (2.5,0) -- (3.5,0);
\end{tikzpicture}
\end{minipage}
\end{center}

\null \hspace{10mm}\textbf{Second case:} $\mu^{(j+1)} = \mu^{(j)}+2^{j}$.
\begin{center}
\begin{minipage}{\linewidth}
\centering
\begin{tikzpicture}
\draw 	(0,0) -- (3,0);
\foreach \x in {0,3}
\draw[xshift=\x cm] (0pt,-2pt) -- (0pt,2pt);
\draw 	(0,0.1) node[left=5pt]{$\mathbf x^{(j)}$};
\draw[line width=2.5pt] (1,0) -- (2,0);
\end{tikzpicture}
\hspace{0.4cm}
$\Longrightarrow$
\hspace{0.4cm}
\begin{tikzpicture}
\draw 	(0,0) -- (6,0);
\foreach \x in {0,3,6}
\draw[xshift=\x cm] (0pt,-2pt) -- (0pt,2pt);
\draw 	(0,0.1) node[left=5pt]{$\mathbf x^{(j+1)}$};
\draw[line width=2.5pt] (4,0) -- (5,0);
\end{tikzpicture}
\end{minipage}
\end{center}
\begin{center}
\begin{minipage}{\linewidth}
\centering
\begin{tikzpicture}
\draw 	(0,0) -- (3,0);
\foreach \x in {0,3}
\draw[xshift=\x cm] (0pt,-2pt) -- (0pt,2pt);
\draw 	(0,0.1) node[left=5pt]{$\mathbf x^{(j)}$};
\draw[line width=2.5pt] 	(0,0) -- (0.5,0)
					(2.5,0) -- (3,0);
\end{tikzpicture}
\hspace{0.4cm}
$\Longrightarrow$
\hspace{0.4cm}
\begin{tikzpicture}
\draw 	(0,0) -- (6,0);
\foreach \x in {0,3,6}
\draw[xshift=\x cm] (0pt,-2pt) -- (0pt,2pt);
\draw 	(0,0.1) node[left=5pt]{$\mathbf x^{(j+1)}$};
\draw[line width=2.5pt] 	(0,0) -- (0.5,0)
					(5.5,0) -- (6,0);
\end{tikzpicture}
\end{minipage}
\end{center}
}
\caption{Possible support change in one iteration step.}
\label{support}
\end{figure}

Generally, in order to recover $\mathbf x^{(j+1)}$ from $\mathbf x^{(j)}$, we apply the following procedure.

\begin{theorem}
Let $\mathbf x\in \C^N$, $N=2^J$, have support of length $m$ (or a support length bounded by $m$) with $2^{L-1}<m\leq 2^L$.
Further, let  $\mathbf x^{(j)}$, $L+1\leq j\leq J-1$, be the periodizations of ${\bf x} = {\bf x}^{(J)}$ as given in {\rm (\ref{periodization})}. Then, for each $j=L+1, \ldots , J-1$, the vector $\mathbf x^{(j+1)}$ can be uniquely recovered from $\mathbf x^{(j)}$ and one nonzero component of the vector of Fourier values $\mathbf y := (\widehat{x}_{2^{J-(j+1)}(2k+1)})_{k=0}^{2^j-1}$.
\end{theorem}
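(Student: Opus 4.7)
The plan is to mimic the structure of the proof of Theorem 3.1, but at the level of doubling the period from $2^j$ to $2^{j+1}$ rather than going from $2^{L+1}$ to $N$. The key observation is that only two candidates for $\mathbf x^{(j+1)}$ are consistent with the given $\mathbf x^{(j)}$, and the odd-indexed Fourier coefficients of $\mathbf x^{(j+1)}$ separate them.

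First I would exploit the periodization relation (\ref{periodization}), which at the transition from level $j+1$ to level $j$ reads
\[ x_k^{(j)} = x_k^{(j+1)} + x_{k+2^j}^{(j+1)}, \qquad k=0,\ldots,2^j-1. \]
Since $|\supp \mathbf x^{(j+1)}| = m \leq 2^L \leq 2^j$, at most one of the two summands on the right-hand side can be nonzero for each $k$. Combined with the fact that $\mathbf x^{(j+1)}$ has a contiguous support interval (mod $2^{j+1}$) of length $m$ starting at some index $\mu^{(j+1)}$ which must be congruent to $\mu^{(j)}$ modulo $2^j$, this leaves exactly the two possibilities $\mu^{(j+1)} = \mu^{(j)}$ or $\mu^{(j+1)} = \mu^{(j)} + 2^j$ depicted in Figure \ref{support}. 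Accordingly, I define the candidate $\mathbf u^0 \in \C^{2^{j+1}}$ with $\mu^{(j+1)} = \mu^{(j)}$ by $u^0_{(\mu^{(j)}+\ell) \bmod 2^{j+1}} := x^{(j)}_{(\mu^{(j)}+\ell) \bmod 2^j}$ for $\ell=0,\ldots,m-1$ and zero elsewhere, and set $\mathbf u^1 := (u^0_{(k+2^j) \bmod 2^{j+1}})_{k=0}^{2^{j+1}-1}$; then $\mathbf x^{(j+1)} \in \{\mathbf u^0, \mathbf u^1\}$.

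Next, I apply Lemma \ref{lemma2} to vectors in $\C^{2^{j+1}}$ (i.e., with $N,J$ replaced by $2^{j+1},j+1$) using the shift parameter $\nu = 1$ at level $j$, which gives
\[ \widehat{u}^1_l = (-1)^l\, \widehat{u}^0_l, \qquad l=0,\ldots,2^{j+1}-1. \]
Simultaneously, Lemma \ref{lemma1} identifies the odd-indexed Fourier coefficients of the unknown $\mathbf x^{(j+1)}$ with the given data: $\widehat{x}^{(j+1)}_{2k+1} = \widehat{x}_{2^{J-(j+1)}(2k+1)} = y_k$ for $k=0,\ldots,2^j-1$.

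The discrimination step is then immediate. Picking any nonzero component $y_{k_0}$, I compute the corresponding candidate value
\[ \widehat{u}^0_{2k_0+1} = \sum_{\ell=0}^{m-1} x^{(j)}_{(\mu^{(j)}+\ell) \bmod 2^j} \, \omega_{2^{j+1}}^{(\mu^{(j)}+\ell)(2k_0+1)} \]
explicitly from the known $\mathbf x^{(j)}$. The ratio $y_{k_0}/\widehat{u}^0_{2k_0+1}$ equals either $+1$ or $-1$ by the relation above, and so uniquely identifies whether $\mathbf x^{(j+1)} = \mathbf u^0$ or $\mathbf x^{(j+1)} = \mathbf u^1$.

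The only subtle point is the existence of a nonzero entry in $\mathbf y$, but this follows exactly as in Remark 3.3: the nonnegative trigonometric polynomial $p(\omega) = |\sum_k x_k e^{-\ii \omega k}|^2$ has degree at most $m-1$, and the $2^j \geq 2m$ sample points $2\pi \cdot 2^{J-(j+1)}(2k+1)/N$ cannot all be among its at most $m-1$ zeros. The main potential pitfall is keeping the indexing straight — in particular the interplay between the roles of $J$ in Lemma \ref{lemma2} (applied now at size $2^{j+1}$) and in Lemma \ref{lemma1} (applied at size $N$) — but once these are carefully aligned the argument is essentially a one-step reduction of the argument used in Theorem 3.1.
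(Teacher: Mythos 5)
Your proposal is correct and follows essentially the same route as the paper's proof: reduce to the two shifted candidates $\mathbf u^0$ and $\mathbf u^1$, use Lemma \ref{lemma2} (with $\nu=1$ at the top level of length $2^{j+1}$) to get the sign flip $\widehat{u}^1_{2k+1}=-\widehat{u}^0_{2k+1}$ on odd-indexed Fourier coefficients, and discriminate with one nonzero value of $\mathbf y$. The only addition beyond the paper's argument is that you explicitly justify the existence of a nonzero entry of $\mathbf y$ via the trigonometric-polynomial count from Remark 3.3, which the paper leaves implicit here.
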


\begin{proof}
Let ${\bf x}^{(j)}$ be the given vector of length $2^{j}$ with support 
$\supp {\bf x}^{(j)}=\{(\mu^{(j)}+r) \, {\rm mod}\,  2^j, \quad r=0,\dots,n-1\}$.
In order to determine ${\bf x}^{(j+1)}$, we only need to decide whether 
$\mu^{(j+1)} = \mu^{(j)}$ or $\mu^{(j+1)} = \mu^{(j)}+2^{j}$, or equivalently,
whether ${\bf x}^{(j+1)}$ is given by

\begin{align}\label{noshift}
x_{\mu^{(j)}+\ell}^{(j+1)}=\left\{ \begin{array}{ll} x_{(\mu^{(j)}+\ell)\, \text{mod} \, 2^j} & \ell=0,\dots,n-1,\\ 0 & \text{else,}  \end{array} \right.
\end{align}
or by
\begin{align}\label{shift}
x_{(\mu^{(j)}+2^j+\ell)\, \text{mod} \, 2^{j+1}}^{(j+1)}=\left\{ \begin{array}{ll} x_{(\mu^{(j)}+\ell)\, \text{mod} \, 2^j} & \ell=0,\dots,n-1,\\ 0 & \text{else.}  \end{array} \right.
\end{align}

The two possible solutions ${\bf x}^{(j+1)}$ only differ by a shift of all components by $2^{j}$. For simplicity let us denote the two solutions by ${\bf u}^{(0)}$ and ${\bf u}^{(1)}$. Then, according to Lemma \ref{lemma2}, the Fourier transformed vectors $\widehat{\bf u}^{(0)}= (\widehat{u}^{(0)}_{k})_{k=0}^{2^{j+1}-1}$ and $\widehat{\bf u}^{(1)}= (\widehat{u}^{(1)}_{k})_{k=0}^{2^{j+1}-1}$ satisfy
the relationship 
$$ \widehat{u}^{(0)}_{2k+1} = - \widehat{u}^{(1)}_{2k+1}, \qquad k=0, \ldots , 2^{j}-1.$$

Using now one nonzero  Fourier value $\widehat{x}_{2^{J-(j+1)}(2k_{0}+1)} = \widehat{x}_{2k_{0}+1}^{(j+1)}$, we can determine the correct vector ${\bf x}^{(j+1)}$.
For that purpose, we just compute 
$$ \widehat{u}^{(0)}_{2k_{0}+1} = \sum_{\ell=0}^{m-1} x^{(j)}_{(\mu^{(j)}+\ell)\, \text{mod} \, 2^j} \omega_{2^{j+1}}^{(2k_{0}+1)(\ell+\mu^{(j)})}
$$
and compare it to the Fourier value $\widehat{x}_{2^{J-(j+1)}(2k_{0}+1)}$.
If $|\widehat{u}^{(0)}_{2k_{0}+1} - \widehat{x}_{2^{J-(j+1)}(2k_{0}+1)}| < |\widehat{u}^{(0)}_{2k_{0}+1} + \widehat{x}_{2^{J-(j+1)}(2k_{0}+1)}|$, then 
${\bf x}^{(j+1)} = {\bf u}^{(0)}$ and $\mu^{(j+1)} = \mu^{(j)}$, otherwise, we find  ${\bf x}^{(j+1)} = {\bf u}^{(1)}$ and $\mu^{(j+1)} = \mu^{(j)}+ 2^{j}$.
\end{proof}
\medskip

\noindent
{\bf (3) Evaluation of the nonzero components of ${\bf x}$.}

Finally, we can use  the vectors $\widetilde{\bf z}^{(\kappa)} \in {\C}^{2^{L+1}}$  computed in step (1) also for a more exact evaluation of the  nonzero components
$x_{k}$ of ${\bf x}$ as follows.
First, we employ our investigations in Theorem \ref{theo41} and observe that
$$ \widetilde{z}_{\ell \, {\rm mod\, 2^{L+1}}}^{(\kappa)} = \omega_{N}^{\ell \kappa} \, (x_{(\ell+ 2^{L+1} \nu) \,  {\rm mod} \, N} \, \omega_{N}^{2^{L+1}\nu \kappa}), \qquad \ell = \mu^{(L+1)}, \ldots , \mu^{(L+1)} +m-1. $$
Using the equation $\mu^{(J)}=\mu^{(L+1)}+2^{L+1}\nu$, we can reformulate this as
$$ \widetilde{z}_{(\mu^{(L+1)}+k) \, {\rm mod\, 2^{L+1}}}^{(\kappa)} = x_{(\mu^{(J)}+k)\,{\rm mod}\, N} \, \omega_{N}^{\kappa (\mu^{(J)}+k)}, \qquad k = 0, \ldots , m-1. $$
Therefore, if  we have to compute the support entries $x_{(\mu^{(J)}+k)\,{\rm mod}\, N}$, $k = 0, \ldots , m-1$,
from the noisy measurements $(\widehat{y}_{k})_{k=0}^{N-1}$ of $\widehat{\bf x}$, we can take the average
$$ x_{(\mu^{(J)}+k)\,{\rm mod}\, N} = \frac {1}{B+1} \sum_{r=0}^{B} \widetilde{z}_{(\mu^{(L+1)}+k) \, {\rm mod\, 2^{L+1}}}^{(\kappa_{r})} \, \omega_{N}^{-\kappa_{r}(\mu^{(J)}+k)}, \qquad k = 0, \ldots , m-1, $$
where $B+1$ is the number of the vectors $\widetilde{\bf z}^{(\kappa)} = {\bf F}_{2^{L+1}}^{-1} (\widehat{y}_{2^{J-L-1}k+ \kappa})_{k=0}^{2^{L+1}-1}$ that we want to involve.

The complete algorithm to compute  $\mathbf x$ from its Fourier transform by a fast sparse FFT algorithm  can now be summarized as follows.

\begin{algorithm} \label{alg2} \null (Sparse FFT for vectors with small support for noisy Fourier data) \\
\textbf{Input:} noisy measurement vector $\widehat{\mathbf y}\in\C^N$, $N=2^J$, $|{\rm supp}\,  {\bf x}| \le m<N$.
\begin{itemize}
\item Compute $L$ such that $2^{L-1}< m \le 2^L$, i.e., $L:= \lceil \log_{2} m \rceil$.
\item If $L=J$ or $L=J-1$, compute $\mathbf x= {\mathbf F}_N^{-1} \widehat{\bf y}$ by inverse FFT.
\item If $L<J-1$:
\begin{enumerate}
\item Choose $\widehat{\widetilde{\bf z}}^{(0)} := \widehat{\bf y}^{(L+1)} =(\widehat{y}_{2^{J-(L+1)}k})_{k=0}^{2^{L+1}-1}$ and compute 
$\widetilde{\bf z}^{(0)} = \mathbf y^{(L+1)}:= {\mathbf F}_{2^{L+1}}^{-1} \widehat{\mathbf y}^{(L+1)}$
using an FFT algorithm for the inverse discrete Fourier transform.
\item Determine the first support index $\mu^{(L+1)}\in\{0,\dots,2^{L+1}-1\}$ of ${\bf x}^{(L+1)}$ 
using the following iteration:

\begin{itemize}
\item[$\bullet$] Compute the energies 
$$ \widetilde{e}_{k}^{(0)} :=  \sum_{\ell = k}^{m+k-1} |\widetilde{z}_{\ell \, {\rm mod} \, 2^{L+1}}^{(0)} |^{2}, \qquad k=0, \ldots , 2^{L+1} -1, $$
and compute $\mu_{0}^{(L+1)} := \mathop{\rm argmax}\limits_{k} \widetilde{e}_{k}^{(0)}$.

\item[$\bullet$] Compute $\widetilde{\bf z}^{(J-L-2)}$ by IFFT from $( \widehat{y}_{2^{J-L-2}(2k+1)})_{k=0}^{2^{L+1}-1}$, determine
$$ \widetilde{e}_{k}^{(1)} : = \sum_{\ell = k}^{m+k-1} |\widetilde{z}_{\ell \, {\rm mod} \,  2^{L+1}}^{(2^{J-L-2})} |^{2}, \qquad k=0, \ldots , 2^{L+1} -1, 
$$
and take $\mu_{1}^{(L+1)} := \mathop{\rm argmax}\limits_{k} \frac{1}{2} ( \widetilde{e}_{k}^{(0)} + \widetilde{e}_{k}^{(1)})$.

\item[$\bullet$] Set $j:=0$.\\
While $ \mu_{j}^{(L+1)} \neq \mu_{j+1}^{(L+1)}$\\
proceed by computing for a further $\kappa \in \{ 1, \ldots , 2^{J-L-1}-1 \} \setminus \{ 2^{J-L-2} \}$
the vector $\widetilde{\bf z}^{(\kappa)}$ by IFFT from $( \widehat{y}_{2^{J-L-1}k+ \kappa)})_{k=0}^{2^{L+1}-1}$, the energies 
$$ \widetilde{e}_{k}^{(j+2)} : = \sum_{\ell = k}^{m+k-1} |\widetilde{z}_{\ell \, {\rm mod} \,  2^{L+1}}^{(\kappa)} |^{2}, \qquad k=0, \ldots , 2^{L+1} -1, 
$$
and take $\mu_{j+2}^{(L+1)} := \mathop{\rm argmax}\limits_{k} \frac{1}{j+3} \sum_{r=0}^{j+2} \widetilde{e}_{k}^{(r)} $.\\
Set $\mu^{(L+1)} := \mu_{j+2}^{(L+1)}$ and $j:=j+1$.\\
End (while).\\
Set ${\bf x}^{(L+1)} = (x_{k}^{(L+1)})_{k=0}^{2^{L+1}-1}$ with 
$$x_{(k + \mu^{(L+1)})\, {\rm mod} \, 2^{L+1}}^{(L+1)}:=  \left\{ \begin{array}{ll}
\widetilde{z}^{(0)}_{(k + \mu^{(L+1)})\, {\rm mod} \, 2^{L+1}} & k= 0, \ldots ,  m-1, \\
0   & k= m, \ldots ,  2^{L+1}-1. \end{array} \right.
$$
\end{itemize}

\item For $j=L+1,\dots,J-1$

Choose a Fourier component $\widehat{y}_{2^{J-(j+1)}(2k_{0}+1)} = \widehat{y}_{2k_{0}+1}^{(j+1)}\neq0$ and compute 
$$a_{j+1}: = \sum_{\ell=0}^{m-1} x^{(L+1)}_{(\mu^{(L+1)}+\ell) \, {\rm mod} \,  2^{L+1}} \, \omega_{2^{j+1}}^{(2k_{0}+1)(\mu^{(j)}+\ell)}. $$

If $|a_{j+1}-\widehat{y}_{2k_{0}+1}^{(j+1)}| < |a_{j+1}+\widehat{y}_{2k_{0}+1}^{(j+1)}|$, then
set $\mu^{(j+1)} := \mu^{(j)}$ and $\mathbf x^{(j+1)} := (x^{(j+1)}_{k})_{k=0}^{2^{j+1}-1}$ with entries
\begin{align*}
x_{\mu^{(j)}+\ell}^{(j+1)}=\left\{ \begin{array}{ll} x^{(j)}_{(\mu^{(j)}+\ell)\, \text{\rm mod} \,2^j} & \ell=0,\dots,n-1,\\ 0 & {\rm else.}  \end{array} \right.
\end{align*}

If $|a_{j+1}-\widehat{y}_{2k_{0}+1}^{(j+1)}| \ge |a_{j+1}+\widehat{y}_{2k_{0}+1}^{(j+1)}|$, then
set  $\mu^{(j+1)} := \mu^{(j)} + 2^{j}$ and $\mathbf x^{(j+1)} := (x^{(j+1)}_{k})_{k=0}^{2^{j+1}-1}$ with entries
\begin{align*}
x_{(\mu^{(j)}+2^j+\ell)\text{\rm mod}\, 2^{j+1}}^{(j+1)}=\left\{ \begin{array}{ll} x^{(j)}_{(\mu^{(j)}+\ell)\text{\rm mod}\,  2^j} & \ell=0,\dots,n-1,\\ 0 & {\rm else.}  \end{array} \right.
\end{align*}
\item Assuming that $\widetilde{\bf z}^{(\kappa_{r})} \in {\C}^{2^{L+1}}$ , $r=0, \ldots , B$, have been  evaluated already in step 2, we 
compute 
$$ x_{(\ell + 2^{L+1} \nu) \, {\rm mod} \, N} = \frac {1}{B+1} \sum_{r=0}^{B} \widetilde{z}_{\ell}^{(\kappa_{r})} \, \omega_{N}^{-\kappa_{r}(\ell + 2^{L+1} \nu)},$$
$\ell = \mu^{(L+1)}, \ldots,  \mu^{(L+1)} +m-1$.
\end{enumerate}
\end{itemize}
\textbf{Output:} $\mathbf x^{(J)}=\mathbf x$.
\end{algorithm}

Let us shortly summarize the arithmetical complexity  of the algorithm.
Step 1 and 2 together require two or more inverse  FFTs of length $2^{L+1}< 4n$, i.e. ${\cal O} (m \log m) $ arithmetical operations. 
The number of involved vectors $\widetilde{\bf z}^{(\kappa)}$ depends on the noise level.
The evaluation of energies can be done at each iteration step by ${\cal O}(m)$ operations.

In step 3, $J-(L+1) = \log_{2} N- \lceil \log_{2} m \rceil -1 < \log_{2} (N/m) $ iterations are needed, where at each iteration  a scalar product  of length $m$  has to be computed and to be compared to a given Fourier value.
To find a nonzero Fourier value needs less than $m$ comparisons, see also Remark \ref{rem3.4}.
Hence only ${\cal O} (m \log m + m \log (N/m)) =  {\cal O}(m \log_{2} N)$ arithmetical operations  are needed to recover ${\bf x}$ in the case of noisy data.

\begin{remark} \hspace{1cm} 
\smallskip

For the computation of ${\bf x}$ it is sufficient to compute ${\bf x}^{(L+1)}$ as well as the first support index $\mu^{(J)}$, where $\mu^{(j+1)}$, $j=L+1, \ldots, J-1$, is iteratively computed from $\mu^{(j)}$. The values $a_{j+1}$ can be obtained directly from ${\bf x}^{(L+1)}$ and $\mu^{(j)}$.
Hence, there is no reason to compute the intermediate vectors ${\bf x}^{(j+1)}$ in step 3 of Algorithm \ref{alg2}, they are only given for better illustration.
\end{remark}

\section{Numerical results}

In this section, we discuss the behavior of the algorithm for noisy input data. We reconstruct randomly generated vectors $\bf x$ from disturbed Fourier data $\widehat{\bf y} = \widehat{\bf x}+\eps$, where we assume uniform noise $\eps = (\varepsilon_k)_{k=0}^{N-1}$ with $|\varepsilon_k| \le \delta$ at different noise levels. As a noise measure, we use the SNR value
$$\text{SNR} = 20\cdot \log_{10} \frac{\|\widehat{\bf x}\|_2}{\|\eps \|_2}.$$
Let us first illustrate the algorithm for a vector $\bf x$ of length $N=2^8=256$ and support length $m=6$, i.e., we have $J=8$ and $L=3$. The nonzero entries of $\bf x$ are $x_{105}=8$, $x_{107}=-3$, $x_{108}=-5$ and $x_{110}=2$. We add a noise vector $\eps$ of $\text{SNR}=20$ to $\widehat{\bf x}$ and reconstruct $\bf x$ from $\widehat{\bf y} = \widehat{\bf x} + \eps$. For the noise vector $\eps$ in this example, it holds that $\|\eps\|_\infty = 1.721$ and $\|\eps \|_1/256 = 0.952$. In Figure \ref{example} we present $\bf x$, and we compare the reconstruction results of our algorithm to the result of the inverse Fourier transform applied to $\widehat{\bf y}$. The reconstruction $\bf x'$ computed by our deterministic sparse FFT algorithm has nonzero components 
$x'_{105}=7.884 + 0.131\ii\,$, 
$x'_{106}=-0.070 - 0.149\ii\,$,
$x'_{107}=-3.100 + 0.171\ii\,$,
$x'_{108}=-4.955 + 0.094\ii\,$,
$x'_{109}=-0.105 + 0.022\ii\,$,
$x'_{110}=1.879 - 0.076\ii\,$,
 and an error $\|{\bf x}-{\bf x'}\|_2/256=0.00146$ whereas the error by the inverse FFT is $\|{\bf x}-{\bf F}_{256}^{-1}{\widehat{\bf y}}\|_2/256=0.00395$. In this example, the reconstruction by our algorithm requires no additional vectors for the determination of $\mu^{(L+1)}$ in step 2 of Algorithm \ref{alg2}, i.e., we only use the two vectors $\widetilde{\bf z}^{(0)}$  and $\widetilde{\bf z}^{(J-L-2)} = \widetilde{\bf z}^{(3)}$ of length $2^{L+1}=16$ in this step. Thus, we have taken $32+4=36 $ Fourier values to recover ${\bf x}$.

\begin{figure}[h!]
\begin{center}
\includegraphics[width=0.85\linewidth]{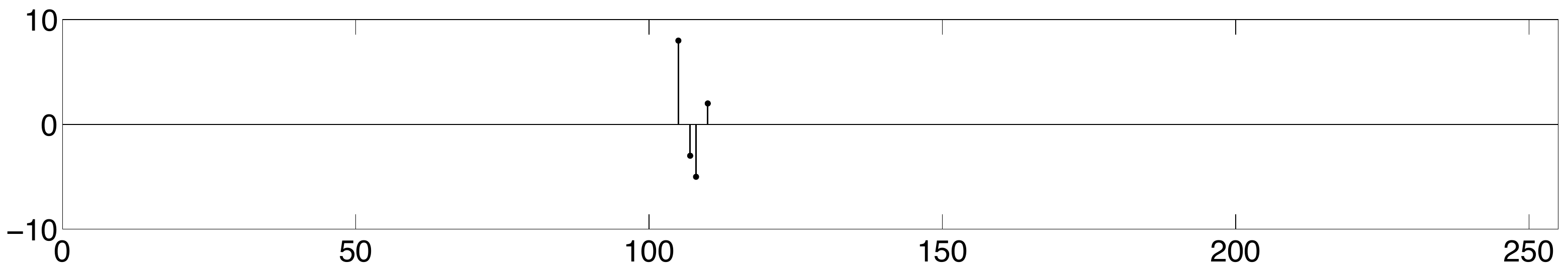}\\
{\small (a)} \\[1ex]
\includegraphics[width=0.85\linewidth]{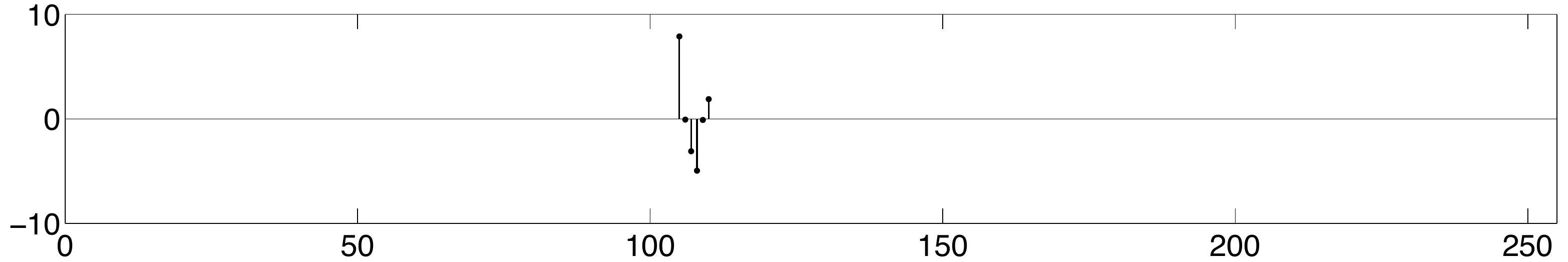}\\
{\small (b)}\\[1ex]
\includegraphics[width=0.85\linewidth]{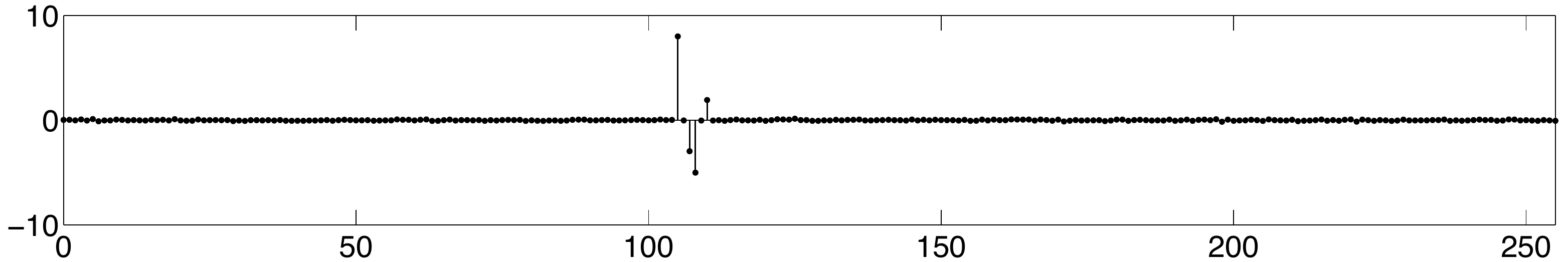}\\
{\small (c)}\\[1ex]
\caption{(a) Original vector ${\bf x}$ of length $N=256$; (b) Reconstruction of ${\bf x}$ using the sparse \\ FFT Algorithm \ref{alg2};
(c) Reconstruction of $\bf x$ using the inverse FFT.}
\label{example}
\end{center}
\end{figure}

In Figure \ref{algvsifft} we show the reconstruction error using the sparse FFT Algorithm \ref{alg2} for vectors $\bf x$ of length $N=2^{22}$ and support length $m=50$ where the vectors are randomly generated with $|\text{Re}(x_k)|\le 10$, $|\text{Im}(x_k)|\le 10$
for $k$ in the support interval. 
We consider noisy Fourier data with SNR values between 0 and 50 and compute the reconstruction $\bf x'$ for 100 vectors $\bf x$ at different noise levels. The quality of the reconstructed vectors $\bf x'$ is evaluated by computing the norm $\|{\bf x} - {\bf x'}\|_2/N$. Figure  \ref{algvsifft} shows the average error norm over all 100 considered vectors. We compare the reconstruction results of our algorithm to the results of an inverse Fourier transform applied to the noisy vectors $\widehat{\bf y}$. 
For  noise levels $\text{SNR} \le 10$, our algorithm made use of additional vectors $\widetilde{\bf z}^{(\kappa)}$ in step 2 of Algorithm \ref{alg2} in order to improve the identification of the first support index $\mu^{(L+1)}$ of ${\bf x}^{(L+1)}$. Here, the algorithm evaluated at most five additional vectors (for $\text{SNR}=0$), hence a total number of up to seven vectors $\widetilde{\bf z}^{(\kappa)}$ has been used to determine $\mu^{(L+1)}$ in some cases.

\begin{figure}[h!]
\begin{center}
\includegraphics[width=0.6\linewidth]{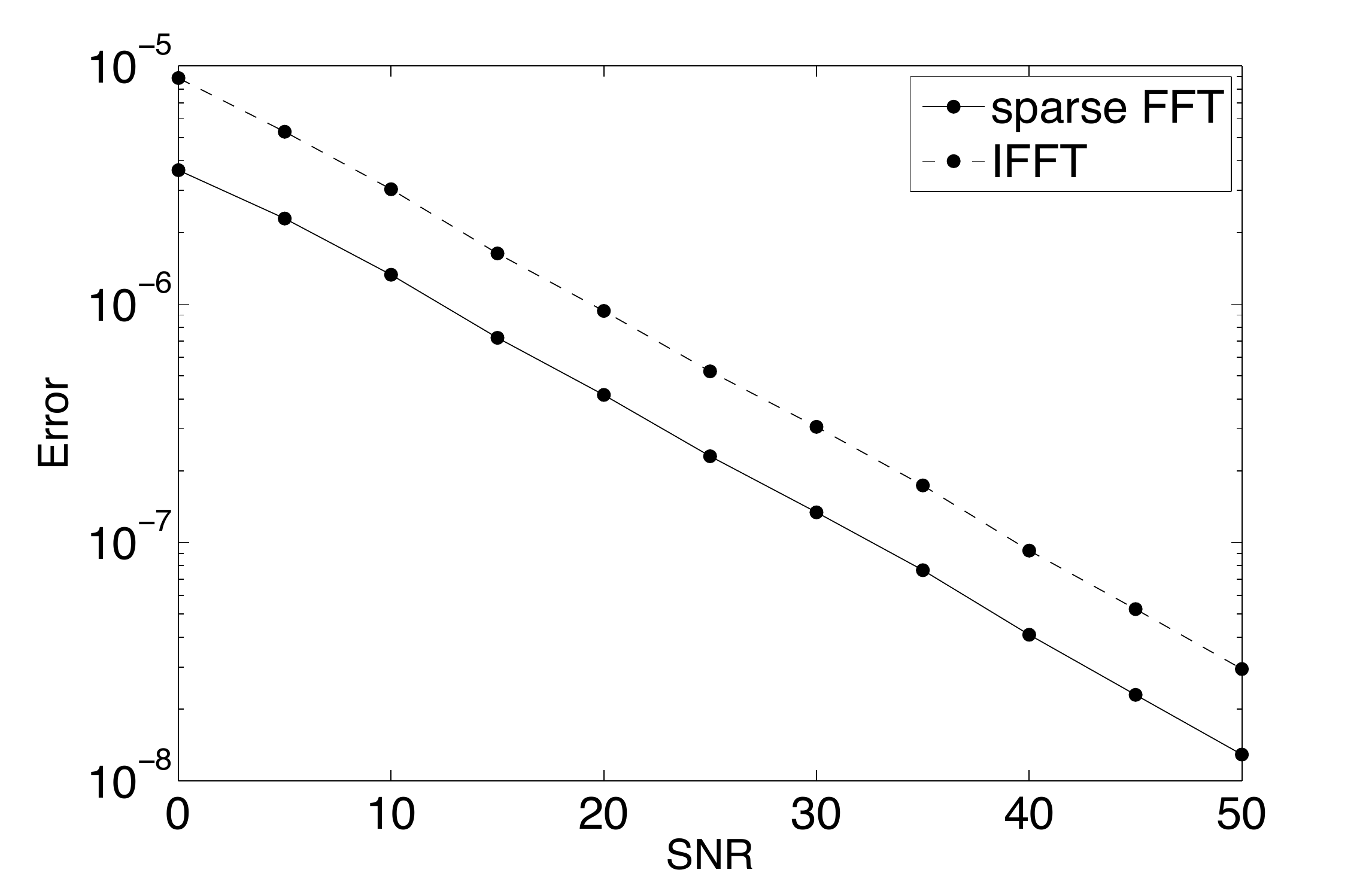}
\caption{Average reconstruction error $\|{\bf x}-{\bf x}'\|_2/N$ for different levels of uniform noise,\\ comparing our deterministic sparse FFT algorithm and inverse FFT.}
\label{algvsifft}
\end{center}
\end{figure}

Determining the first index of the support of $\bf x$ (i.e., finding the correct $\mu = \mu^{(J)}$) is one of the crucial points of the algorithm for noisy input data. If $\mu^{(L+1)}$ is not identified correctly, the correct support interval cannot be found anymore, even if all shifts in step 3 of Algorithm \ref{alg2} are correct. 

In a third experiment we again take randomly generated vectors $\bf x$ of length $N=2^{22}$ with support length $m=50$ or $m=2^{18}$, where  $|\text{Re}(x_k)|\le 10$, $|\text{Im}(x_k)|\le 10$ for $k$ in the support interval. 

\begin{table}[h]
\begin{center}
\begin{tabular}{|c||c|c|c||c|c|c|}
\hline
\multirow{3}{*}{SNR} & \multicolumn{3}{c||}{$N=2^{22}$, $m=50$} & \multicolumn{3}{c|}{$N=2^{22}$, $m=2^{18}$}\\\cline{2-7}
 & correctly & \multirow{2}{*}{$\|\eps\|_\infty$} & \multirow{2}{*}{$\|\eps\|_1/N$}  & correctly & \multirow{2}{*}{$\|\eps\|_\infty$} & \multirow{2}{*}{$\|\eps\|_1/N$} \\
 & identified $\mu$ & & & identified $\mu$ & &  \\\hline
0 & 86\% & 68.367 & 37.002 & 78\% & 4927.294 & 2666.891 \\\hline
5 & 97\% & 37.799 & 20.458 & 93\% & 2911.275 & 1575.695 \\\hline
10 & 99\% & 22.262 & 12.049 & 97\% & 1365.686 & 739.186 \\\hline
15 & 100\% & 12.559 & 6.798 & 100\% & 841.737 & 455.585 \\\hline
20 & 100\% & 6.751 & 3.654 & 100\% & 483.223 & 261.542 \\\hline
25 & 100\% & 3.796 & 2.055 & 100\% & 261.897 & 141.752 \\\hline
30 & 100\% & 2.147 & 1.162 & 100\% & 144.593 & 78.259 \\\hline
35 & 100\% & 1.242 & 0.672 & 100\% & 84.374 & 45.665 \\\hline
40 & 100\% & 0.668 & 0.362 & 100\% & 47.666 & 25.800 \\\hline
\end{tabular}
\smallskip

\caption{Percentage of correctly identified $\mu$ and average norm of noise in 100 randomly chosen\\ vectors for different noise levels, dependent on length $N$ and support length $m$ of $\bf x$.}
\label{mudiff}
\end{center}
\end{table}

Table \ref{mudiff} shows 
the number of cases in which the first support index $\mu$ has been determined correctly by Algorithm \ref{alg2} for 100 randomly chosen vectors for each noise level. Additionally, we give an average for the norm of the noise vectors $\eps$ at each noise level. 

The results show that the algorithm succeeds for very short support intervals as well as for long support intervals compared to the full vector length. In cases where $\mu$ could not be determined correctly, the error originates from step 2 of the algorithm where $\mu^{(L+1)}$ has to be determined. However, the deviation from the correct support was small ($\le 6$) in any case.

We can conclude that even for high noise level, the support of the reconstructed  vector ${\bf x}$ is correctly found in most of the cases. The support is always correct for noise levels  with SNR $\ge 15$. Table \ref{mudiff} also shows that the absolute noise $\varepsilon_{k}$ at each component can be considerably large. It is essentially larger for  vectors with larger support  since in this case also the signal energy grows accordingly.

\subsection*{Acknowledgement}

The research in this paper is partially funded by the project PL 170/16-1 of the German Research Foundation (DFG). This is gratefully acknowledged.


\end{document}